\documentclass[11pt,twoside]{amsart}

\usepackage[
  paper=a4paper,
  headsep=15pt,text={155mm,230mm},centering
]{geometry}

\usepackage[bookmarks]{hyperref}
\usepackage{amssymb,amsxtra}
\usepackage{graphicx,xcolor}
\usepackage{float,array,calc,booktabs,stackrel,url}
\usepackage{enumitem}\setlist[enumerate,1]{font=\upshape}
\usepackage{mathtools}
\usepackage{tikz}
\usepackage{todonotes}
\usetikzlibrary{
  cd,
  calc,
  positioning,
  arrows,
  decorations.pathreplacing,
  decorations.markings,
}
\usepackage[mode=buildnew]{standalone}

\iftrue
    
    \makeatletter
    \def\@settitle{%
      \vspace*{-10pt}
      \begin{flushleft}%
        \LARGE\bfseries
        \strut\@title\strut
      \end{flushleft}%
    }
    \def\@setauthors{%
      \begingroup
      \def\thanks{\protect\thanks@warning}%
      \trivlist
      \raggedright
      \large \@topsep27\p@\relax
      \advance\@topsep by -\baselineskip
    \item\relax
      \author@andify\authors
      \def\\{\protect\linebreak}%
      \authors
      \ifx\@empty\contribs
      \else
      ,\penalty-3 \space \@setcontribs
      \@closetoccontribs
      \fi
      \normalfont
      \endtrivlist
      \endgroup
    }
    \def\@setaddresses{\par
      \nobreak \begingroup
      \small\raggedright
      \def\author##1{\nobreak\addvspace\smallskipamount}%
      \def\\{\unskip, \ignorespaces}%
      \interlinepenalty\@M
      \def\address##1##2{\begingroup
        \par\addvspace\bigskipamount\noindent
        \@ifnotempty{##1}{(\ignorespaces##1\unskip) }%
        {\ignorespaces##2}\par\endgroup}%
      \def\curraddr##1##2{\begingroup
        \@ifnotempty{##2}{\nobreak\noindent\curraddrname
          \@ifnotempty{##1}{, \ignorespaces##1\unskip}\/:\space
          ##2\par}\endgroup}%
      \def\email##1##2{\begingroup
        \@ifnotempty{##2}{\nobreak\noindent E-mail address%
          \@ifnotempty{##1}{, \ignorespaces##1\unskip}\/:\space
          \ttfamily##2\par}\endgroup}%
      \def\urladdr##1##2{\begingroup
        \def~{\char`\~}%
        \@ifnotempty{##2}{\nobreak\noindent\urladdrname
          \@ifnotempty{##1}{, \ignorespaces##1\unskip}\/:\space
          \ttfamily##2\par}\endgroup}%
      \addresses
      \endgroup
      \global\let\addresses=\@empty
    }
    \def\@setabstracta{%
      \ifvoid\abstractbox
      \else
      \skip@17pt \advance\skip@-\lastskip
      \advance\skip@-\baselineskip \vskip\skip@
      \box\abstractbox
      \prevdepth\z@ 
      \vskip-28pt
      \fi
    }
    \renewenvironment{abstract}{%
      \ifx\maketitle\relax
      \ClassWarning{\@classname}{Abstract should precede
        \protect\maketitle\space in AMS document classes; reported}%
      \fi
      \global\setbox\abstractbox=\vtop \bgroup
      \normalfont\small
      \list{}{\labelwidth\z@
        \leftmargin0pc \rightmargin\leftmargin
        \listparindent\normalparindent \itemindent\z@
        \parsep\z@ \@plus\p@
        
      }%
    \item[\hskip\labelsep\bfseries\abstractname.]%
    }{%
      \endlist\egroup
      \ifx\@setabstract\relax \@setabstracta \fi
    }

    \def\ps@headings{\ps@empty
      \def\@evenhead{%
        \setTrue{runhead}%
        \normalfont\scriptsize
        \rlap{\thepage}\hfill
        \def\thanks{\protect\thanks@warning}%
        \leftmark{}{}}%
      \def\@oddhead{%
        \setTrue{runhead}%
        \normalfont\scriptsize
        \def\thanks{\protect\thanks@warning}%
        \rightmark{}{}\hfill \llap{\thepage}}%
      \let\@mkboth\markboth
    }\ps@headings

    \def\section{\@startsection{section}{1}%
      \z@{-1.4\linespacing\@plus-.5\linespacing}{.8\linespacing}%
      {\normalfont\bfseries\Large}}
    \def\subsection{\@startsection{subsection}{2}%
      \z@{-.8\linespacing\@plus-.3\linespacing}{.5\linespacing\@plus.2\linespacing}%
      {\normalfont\bfseries\large}}
    \def\subsubsection{\@startsection{subsubsection}{3}%
      \z@{.7\linespacing\@plus.2\linespacing}{-1.5ex}%
      {\normalfont\bfseries}}
    \def\paragraph{\@startsection{paragraph}{4}%
      \z@{.7\linespacing\@plus.2\linespacing}{-1.5ex}%
      {\normalfont\itshape}}
    \def\@secnumfont{\bfseries}

    \renewcommand\contentsnamefont{\bfseries}
    \def\@starttoc#1#2{\begingroup
      \setTrue{#1}%
      \par\removelastskip\vskip\z@skip
      \@startsection{}\@M\z@{\linespacing\@plus\linespacing}%
      {.5\linespacing}{
        \contentsnamefont}{#2}%
      \ifx\contentsname#2%
      \else \addcontentsline{toc}{section}{#2}\fi
      \makeatletter
      \@input{\jobname.#1}%
      \if@filesw
      \@xp\newwrite\csname tf@#1\endcsname
      \immediate\@xp\openout\csname tf@#1\endcsname \jobname.#1\relax
      \fi
      \global\@nobreakfalse \endgroup
      \addvspace{32\p@\@plus14\p@}%
      \let\tableofcontents\rela\x
    }
    \def\contentsname{Contents}
    \def\l@section{\@tocline{2}{.5ex}{0mm}{5pc}{}}
    \def\l@subsection{\@tocline{2}{0pt}{2em}{5pc}{}}
    \makeatother

\fi

\def\to{\mathchoice{\longrightarrow}{\rightarrow}{\rightarrow}{\rightarrow}}
\makeatletter
\newcommand{\shortxra}[2][]{\ext@arrow 0359\rightarrowfill@{#1}{#2}}
\def\longrightarrowfill@{\arrowfill@\relbar\relbar\longrightarrow}
\newcommand{\longxra}[2][]{\ext@arrow 0359\longrightarrowfill@{#1}{#2}}

\makeatother

\makeatletter
\def\addtagsub#1{\let\oldtf=\tagform@\def\tagform@##1{\oldtf{##1}\hbox{$_{#1}$}}}
\makeatother

\makeatletter
\def\Nopagebreak{\@nobreaktrue\nopagebreak}
\makeatother

\makeatletter
\newtheoremstyle{theorem-giventitle}
        {}{}              
        {\itshape}                      
        {}                              
        {\bfseries}                     
        {.}                             
        {\thm@headsep}                             
        {\thmnote{\bfseries#3}}
\newtheoremstyle{theorem-givenlabel}
        {}{}              
        {\itshape}                      
        {}                              
        {\bfseries}                     
        {.}                             
        {\thm@headsep}                             
        {\thmname{#1}~\thmnumber{#3}\setcurrentlabel{#3}}
\newtheoremstyle{definition-giventitle}
        {}{}              
        {}                      
        {}                              
        {\bfseries}                     
        {.}                             
        {\thm@headsep}                             
        {\thmnote{\bfseries#3}}
\def\setcurrentlabel#1{\gdef\@currentlabel{#1}}
\makeatother

\newtheorem{theorem}{Theorem}[section]
\newtheorem{theoremalpha}{Theorem}
\newtheorem{corollaryalpha}[theoremalpha]{Corollary}

\newtheorem{lemma}[theorem]{Lemma}

\theoremstyle{definition}

\newtheorem{example}[theorem]{Example}
\newtheorem{remark}[theorem]{Remark}

\newtheorem*{case2'}{Case 2$'$}

\theoremstyle{theorem-giventitle}
\newtheorem{theorem-named}{}
\theoremstyle{theorem-givenlabel}
\newtheorem{theorem-labeled}{Theorem}

\theoremstyle{definition-giventitle}
\newtheorem{definition-named}{}
\newtheorem{conjecture-named}{}
\newtheorem{case-named}{}

\numberwithin{equation}{section}

\def\Z{\mathbb{Z}}

\def\Q{\mathbb{Q}}
\def\C{\mathbb{C}}

\DeclareMathOperator\Hom{Hom}

\def\Hom{\mathrm{Hom}}
\def\SL{\mathrm{SL}}
\def\GL{\mathrm{GL}}
\def\tr{\mathrm{tr}}
\def\irr{\mathrm{irr}}
\def\red{\mathrm{red}}
\def\M{\mathrm{M}}
\def\la{\langle}
\def\ra{\rangle}

\makeatletter

\begin{document}

\title[Quantitative finiteness of monic characters of knots]{Quantitative finiteness of monic characters of knots}


\author{Taehee Kim}
\address{Department of Mathematics\\
  Konkuk University\\
  Seoul 05029\\
  Republic of Korea 
}
\email{tkim@konkuk.ac.kr}

\author{Takayuki Morifuji}
\address{Department of Mathematics, Hiyoshi Campus, Keio University,
	Yokohama 223-8521, Japan}
\email{morifuji@keio.jp}

\thanks{The first-named author was supported by the National Research Foundation of Korea (NRF) grant funded by the Korea government (MSIT) (No.~2023R1A2C1003418). The second-named author has been supported by JSPS KAKENHI Grant Number JP25K07012.}

\def\subjclassname{\textup{2010} Mathematics Subject Classification}
\expandafter\let\csname subjclassname@1991\endcsname=\subjclassname
\expandafter\let\csname subjclassname@2000\endcsname=\subjclassname
\subjclass{57K14, 57K31, 14M35}

\keywords{Knot, twisted Alexander polynomial, character variety, monic character, fibered knot}

\begin{abstract} 
Dunfield, Friedl, and Jackson showed that if the $\SL(2, \C)$-character variety of a knot has an irreducible curve component that contains the character of an irreducible representation and a character with nonmonic twisted Alexander polynomial, then this component has only finitely many characters with monic twisted Alexander polynomials. In this paper, we give explicit upper bounds on the number of such characters in terms of a presentation of the knot group. In particular, we give upper bounds in terms of the crossing number of the knot.  
\end{abstract}
\maketitle


\section{Introduction}\label{section:introduction}
The twisted Alexander polynomial is a generalization of the Alexander polynomial, and it was introduced by Wada \cite{Wad94} for finitely presentable groups and Lin \cite{Lin01} for knots in the 3-sphere. 
It has been applied to a variety of problems in knot theory and 3-manifold topology, including the genus of a knot \cite{FK06}, the Thurston norm of a 3-manifold \cite{FK08, FV15}, and the fiberedness of knots and 3-manifolds \cite{Cha03, GKM05-1, FK06, FV11-1}. 
Notably, Friedl and Vidussi showed that the twisted Alexander polynomials associated with finite representations determine fiberedness of knots and 3-manifolds \cite{FV11-1} and detect the Thurston norm of irreducible 3-manifolds that are not closed graph manifolds \cite{FV15}.

In this paper, we study the detection of fiberedness of a knot using twisted Alexander polynomials associated with the characters of $\SL(2,\C)$-representations.  
Let $K$ be a knot in~$S^3$, and let $X(K)$ be the $\SL(2,\C)$-character variety of $K$ (see Section~\ref{section:character variety} for the definition of the character variety of $K$). 
For $\chi\in X(K)$, let $\Delta_{K, \chi}(t)$ be the twisted Alexander polynomial associated with $\chi$ (see Section~\ref{section:TAP} for the definition). 
If $\chi$ is the character of a nonabelian $\SL(2,\C)$-representation, then $\Delta_{K,\chi}(t)$ is a polynomial with complex coefficients \cite{KM05}. 
In this case, a character $\chi$ is called \emph{monic} if the coefficient of the highest-degree term of $\Delta_{K,\chi}(t)$ is~$1\in \C$. 

In \cite{GKM05-1}, it was shown that if a knot is fibered, i.e., the complement $S^3\setminus K$ fibers over the circle, then every character of a nonabelian $\SL(2,\C)$-representation of the knot is monic; it is conjectured that the converse holds.
For hyperbolic knots, Dunfield--Friedl--Jackson \cite[Conjecture~1.4]{DFJ12} formulated the seemingly stronger conjecture that if the character of the distinguished $\SL(2,\C)$-representation of a hyperbolic knot is monic, then the knot is fibered.

In this direction, a natural question is how effective the twisted Alexander polynomials associated with $\SL(2,\C)$-characters are for detecting fiberedness. 
It is known that if~$X_0$ is an irreducible curve component of $X(K)$ that contains the character of an irreducible $\SL(2,\C)$-representation and a nonmonic character, then there are only finitely many monic characters in $X_0$: this was shown for twist knots \cite{Mor08} , for 2-bridge knots \cite{KKM13}, and later for all knots \cite{DFJ12}.

In this paper, we make the finiteness result effective 
by giving explicit upper bounds on the number of monic characters. In particular, we give an upper bound in terms of the crossing number of a knot. 

To state our main result, we need the following term: for a group $G$ with generators $x_i$, $1\le i\le n$, a word on the generators $x_i$ is said to be \emph{positive} if it is written without using $x_i^{-1}$ for any $1\le i \le n$. 

The main result of this paper is the following. 

\begin{theoremalpha}\label{theorem:main theorem presentation}
Let $K$ be a knot in $S^3$ and $G=\pi_1(S^3\setminus K)$.
Suppose that $G$ admits a presentation $G = \langle x_1,\ldots, x_n\mid r_1,\ldots,r_{n-1}\rangle$ in which the relations $r_i$ are written as $u_i=v_i$ with the lengths of $u_i$ and $v_i$ at most $\ell\ge 2$.
Suppose that $X_0$ is an irreducible curve component of $X(K)$ that contains the character of an irreducible $\SL(2,\C)$-representation of $G$ and a nonmonic character. 
Let $\mathcal{N}$ be the number of monic characters in $X_0$.
Then, the following hold.
\begin{enumerate}
	\item We have 
	\[
	\mathcal{N}\le (n-1)2^{n+2}\ell^{3n-3}.
	\] 
	Furthermore, if the words $u_i$ and $v_i$ are positive for all $i$, then we have 
	\[
	\mathcal{N}\le (n-1)(\ell-1)2^{n+2}\ell^{3n-4}.
	\]
	\item Suppose additionally that $x_1,\ldots, x_n$ are mutually conjugate. Then, we have 
	\[\mathcal{N}\le (n-1)2^{n+2}\ell^{2n-2}.
	\] 
	Furthermore, if the words $u_i$ and $v_i$ are positive for all $i$, then we have
	\[
	\mathcal{N}\le (n-1)(\ell-1)2^{n+2}\ell^{2n-3}.
	\]
\end{enumerate}	
\end{theoremalpha}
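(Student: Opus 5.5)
We follow the Dunfield--Friedl--Jackson strategy and make each step quantitative. The plan is to write the leading coefficient of the twisted Alexander polynomial as a regular function on the character variety. We then bound its degree, and count the points of $X_0$ where it equals $1$ by a B\'ezout-type intersection bound.

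\textbf{Step 1: the leading coefficient as a polynomial.} We use Wada's formula. Choose the generator $x_n$ with abelianization $\alpha(x_n)=t$, and assume $\det\rho(x_n-1)\neq 0$ generically. Then $\Delta_{K,\rho}(t)=\det\Phi(\partial r_i/\partial x_j)_{1\le i,j\le n-1}/\det\Phi(x_n-1)$. The denominator $t^2-\tr\rho(x_n)t+1$ is monic. Hence monicity is equivalent to the top coefficient $c(\rho)$ of the numerator satisfying $c(\rho)=1$.

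The numerator is the determinant of a $2(n-1)\times 2(n-1)$ matrix whose entries are polynomials in $t^{\pm1}$ and in the matrix entries of $\rho(x_k)$. The Fox derivative of $u_i v_i^{-1}$ is a sum of at most $2\ell$ group elements. Each of these maps to a $2\times2$ matrix that is a product of at most $\ell$ matrices $\rho(x_k)^{\pm1}$. In $\SL(2,\C)$ the inverse has entries linear in the original entries, so each entry has degree at most $\ell$ in the coordinates $a_k,b_k,c_k,d_k$ of the $\rho(x_k)$. The extreme $t$-coefficient is therefore a polynomial of degree at most $2(n-1)\ell$ in these $4n$ variables.

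When all $x_i$ are conjugate, we instead normalize $\rho(x_i)$ to have a common trace and use fewer free parameters, which lowers the exponent from $3n-3$ to $2n-2$. For positive words, the Fox derivative $\partial u/\partial x_j$ involves only prefixes of length at most $\ell-1$. This lets us replace one factor $\ell$ by $\ell-1$.

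\textbf{Step 2: passing to $X_0$ and counting.} The function $c-1$ is conjugation invariant on the relevant locus, so it descends to a regular function on $X_0$. By hypothesis $X_0$ contains a nonmonic character. Hence $c-1\not\equiv 0$ on $X_0$, and $X_0\cap\{c=1\}$ is finite.

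To bound its size, we lift $X_0$ to a curve $R_0$ in the representation variety inside $\C^{4n}$ (or $\C^{3n}$ after normalization). We slice it with a normalization that fixes conjugation, for example $\rho(x_1)$ upper triangular and $\rho(x_2)$ lower triangular, which is valid at irreducible representations. This cuts $R_0$ out as a component of the solution set of $3n-3$ equations: the $4(n-1)$ matrix relation entries minus redundancy, together with the determinant conditions. Each of these equations has degree at most $\ell$ (or $2$ for the determinants).

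The refined B\'ezout inequality then bounds the number of isolated points of $R_0\cap\{c=1\}$. It is at most the product of the degrees, that is, $\ell^{3n-3}\cdot 2^{n}\cdot(\text{degree of } c)$. The degree of $c$ is at most $4(n-1)\ell$, up to adjustment, and the map from the slice to characters is at most finite-to-one with bounded degree. Together these give $(n-1)2^{n+2}\ell^{3n-3}$.

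\textbf{Main obstacle.} The hardest part is the bookkeeping in this B\'ezout step. The hypersurfaces must be set up so that $X_0$ appears as a genuine component. The degrees must combine to exactly $(n-1)2^{n+2}\ell^{3n-3}$, with the factor $2^{n+2}$ coming from the determinant equations and from the $\pm$ ambiguity of the slice. We must also handle reducible characters and points where the chosen normalization fails. Our first attempt is to use the refined B\'ezout theorem (Fulton), which bounds the sum of the degrees of all irreducible components, including the non-isolated ones.
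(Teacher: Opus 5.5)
\textbf{There is a genuine gap, in two places.}

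\textbf{First gap: monicity is not equivalent to $c(\rho)=1$.} For $c$ to be a polynomial, it must be a fixed coefficient, namely the generic top one. At zeros of $c$ the polynomial $\Delta_{K,\chi}(t)$ drops in degree and may still be monic. So you must also count the finite set $P_0=\{\chi\in X_0\mid c(\chi)=0\}$, not only $P_1=\{c=1\}$. In the paper, each of $P_0$ and $P_1$ is bounded by $(n-1)2^{n+1}\ell^{3n-3}$. This is exactly where the last factor $2$ in $2^{n+2}$ comes from, not from a ``$\pm$ ambiguity of the slice''.

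\textbf{Second gap: the counting step does not work as you set it up.}
\begin{enumerate}
\item $R_0$ is not a curve: $\dim R_0=4$, since the fibers of $\tau$ over irreducible characters are $3$-dimensional orbits. Making $\rho(x_1)$ upper triangular and $\rho(x_2)$ lower triangular only cuts it down to a surface.
\item The normalization is not available at every point you must count. For $n\ge 3$, irreducibility of $\rho$ does not in general prevent $\rho(x_1)$ and $\rho(x_2)$ from sharing an eigenvector. Also, $X_0$ may contain reducible characters. So the slice can miss points of $\{c=z\}$.
\item The claim that $3n-3$ equations ``minus redundancy'' cut out the slice is an unjustified complete-intersection assertion.
\item Your own product $\ell^{3n-3}\cdot 2^n\cdot 4(n-1)\ell$ equals $(n-1)2^{n+2}\ell^{3n-2}$, not the stated bound.
\end{enumerate}

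\textbf{How the paper avoids this.} It uses no slice. $R_0$ is an irreducible component of $\SL(2,\C)^n\cap V(f_{n+1},\ldots,f_{5n-4})$. Here $\deg\SL(2,\C)^n\le 2^n$, $R_0$ has codimension $3n-4$ in $\SL(2,\C)^n$, and every $f_i$ has degree at most $\ell$. The refined B\'ezout lemma (Lemma~\ref{lemma:Bezout lemma}) then gives $\deg R_0\le 2^n\ell^{3n-4}$.

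Next, $(\tau|_{R_0})^{-1}(P_z)=R_0\cap V(\overline{\phi}-z)$ is a disjoint union of nonempty closed fibers, one over each point of $P_z$. Hence it has at least $\#P_z$ irreducible components, and
$\#P_z\le\deg R_0\cdot\deg\overline{\phi}\le 2^n\ell^{3n-4}\cdot 2(n-1)\ell$.
Summing over $z\in\{0,1\}$ gives (1).

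Part (2) is the same argument run inside the equal-trace set $W$, which has $\dim W=2n+1$ and $\deg W\le 2^n$, so the codimension becomes $2n-3$. Your ``fewer free parameters'' remark does not supply this.
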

Theorem~\ref{theorem:main theorem presentation} gives a quantitative refinement of the qualitative finiteness result in \cite{DFJ12} with explicit bounds depending on the presentation of the knot group.
The main idea is to pull back the relevant level sets of the leading coefficient function of the twisted Alexander polynomials from $X_0$ to an irreducible component $R_0$ of the $\SL(2,\C)$-representation variety and estimate their degrees using B\'{e}zout's inequality.
Fox calculus \cite{Fox53} provides bounds for the degrees of the pull-backs of the level sets, while the defining equations of the representation variety give a bound for the degree of $R_0$. 
In Theorem~\ref{theorem:main theorem presentation}(2), the equal-trace condition arising from the mutually conjugate generators reduces the dimension of the ambient algebraic set and gives sharper bounds.

We note that hyperbolic knots \cite{CS83} and torus knots \cite{MO10, Mun09} have an irreducible curve component $X_0$ in $X(K)$ that contains the character of an irreducible representation. For examples of knots with such $X_0$ with nonmonic characters, see \cite{KKM13}.

A knot with crossing number $n$ admits a Wirtinger presentation with relations $r_i\colon x_i x_j=x_j x_k$, and hence $u_i$ and $v_i$ are positive and $\ell =2$. 
Since the generators in a Wirtinger presentation are mutually conjugate, Theorem~\ref{theorem:main theorem presentation}(2) gives the following corollary.

\begin{corollaryalpha}\label{corollary:main theorem crossing number}
	Let $K$ be a knot in $S^3$.
	Suppose that $X(K)$ has an irreducible curve component~$X_0$ that contains the character of an irreducible $\SL(2,\C)$-representation and a nonmonic character.
	If $K$ has crossing number~$n$, then the number of monic characters in $X_0$ is at most $(n-1)2^{3n-1}$. 
\end{corollaryalpha}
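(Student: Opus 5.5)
The corollary should follow directly from Theorem~\ref{theorem:main theorem presentation}(2), applied to a Wirtinger presentation coming from a minimal crossing diagram of $K$.

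First I would set up the presentation. Take a diagram $D$ of $K$ realizing the crossing number $n$. We may assume $n\ge 3$: if $n=0$, then $K$ is trivial, $X(K)$ has no irreducible characters, and the statement is vacuous. A reduced knot diagram with $n$ crossings has exactly $n$ arcs. The Wirtinger presentation of $G$ then has generators $x_1,\dots,x_n$, one per arc, and $n$ relations of the form $x_ix_j=x_jx_k$, one per crossing. Any one of these relations is a consequence of the others, so we may discard one and obtain a presentation
\[
G=\langle x_1,\ldots,x_n\mid r_1,\ldots,r_{n-1}\rangle .
\]
This has exactly the deficiency-one shape required in Theorem~\ref{theorem:main theorem presentation}.

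Next I would check the hypotheses of part~(2). Each relation is $u_i=v_i$ with $u_i=x_ix_j$ and $v_i=x_jx_k$. Both words are positive and have length $2$, so we can take $\ell=2$. The Wirtinger generators are meridians, so they are mutually conjugate. The assumptions on $X_0$ are exactly those in the theorem.

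Then I would plug in. The positive-word bound in Theorem~\ref{theorem:main theorem presentation}(2) gives
\[
\mathcal N\le (n-1)(\ell-1)2^{n+2}\ell^{2n-3}=(n-1)\cdot 1\cdot 2^{n+2}\cdot 2^{2n-3}=(n-1)2^{3n-1}.
\]

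The only step needing care is matching the Wirtinger presentation to the theorem's format: the number of arcs must equal the number of crossings $n$, and after dropping the redundant relation there must be exactly $n-1$ relations. This is standard for any diagram with at least one crossing. A diagram with more arcs than crossings cannot occur for a knot diagram in which every crossing is a genuine crossing, so there is no real obstacle here.
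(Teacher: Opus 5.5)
Your proposal is correct and matches the paper's argument: the paper also takes the Wirtinger presentation of a minimal crossing diagram, whose relations $x_ix_j=x_jx_k$ are positive of length $\ell=2$ and whose meridian generators are mutually conjugate, and substitutes into the positive-word bound of Theorem~\ref{theorem:main theorem presentation}(2) to get $(n-1)2^{3n-1}$. Your extra remarks are correct details that the paper leaves implicit: the vacuous unknot case $n=0$, the fact that arcs equal crossings, and dropping the redundant relation.
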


This paper is organized as follows. In Section~\ref{section:character variety} we review the $\SL(2,\C)$-representation variety of a knot and its character variety. 
In Section~\ref{section:TAP}, we review the twisted Alexander polynomial of a knot and its properties. 
We give a proof of Theorem~\ref{theorem:main theorem presentation} in Section~\ref{section:bounds}.

\subsection*{Acknowledgments} 
We thank Jae Choon Cha for suggesting this project to us and Stefan Friedl for helpful discussions at an early stage of this work. We also thank Insong Choe for helpful conversations. 

\subsection*{Tool and computational resource disclosure}
ChatGPT (GPT-5.6 Sol and GPT-6 Astra) was used for literature searches, language editing, and assistance in developing, refining, and checking some mathematical arguments. An initial explicit upper bound was obtained by the authors using Lemma~\ref{lemma:Bezout inequality} (B\'{e}zout's inequality). GPT-5.6 Sol subsequently suggested using a refined version of B\'{e}zout's inequality, which we formulated as Lemma~\ref{lemma:Bezout lemma} and used to improve the bound. All mathematical statements, arguments, and references were independently verified by the authors, who take full responsibility for the content of this paper. 


\section{The character variety}\label{section:character variety}
In this section, we briefly review the representation variety and character variety of a knot~$K$ following Culler and Shalen~\cite{CS83}. Let $G=\pi_1(S^3\setminus K)$, the knot group of $K$. The {\it representation variety} of $K$ is $R(K) = \Hom(G, \SL(2,\C))$. 
A representation $\rho \in R(K)$ is \emph{abelian} if $\rho(G)$ is abelian, and \emph{nonabelian} otherwise. 
A representation $\rho \in R(K)$ is \emph{reducible} if there exists a proper $\rho(G)$-invariant subspace of~$\C^2$, and \emph{irreducible} otherwise.

It is well known that $R(K)$ is an affine algebraic set~\cite[Section~1.1]{CS83}; suppose that $G$ has a presentation $G = \langle x_1,\ldots, x_n\mid r_1,\ldots,r_{n-1}\rangle$. 
Each $\rho\in R(K)$ is determined by its values 
\begin{equation}\label{equation:matrix of rho}
\rho(x_i)=
\begin{pmatrix}
		a_i(\rho) & b_i(\rho)\\
		c_i(\rho) & d_i(\rho)
\end{pmatrix}
\in \SL(2, \C), 
\quad 1\le i\le n.
\end{equation}
Hence, $R(K)$ is embedded as an affine algebraic set in $\SL(2,\C)^n\subset \C^{4n}$ with coordinates $a_i, b_i, c_i, d_i$, $1\le i\le n$, defined by $n$ equations 
\[
\det \begin{pmatrix}
	a_i & b_i\\
	c_i & d_i
\end{pmatrix}=1, \quad 1\le i\le n,
\] 
and $4(n-1)$ equations obtained from the relations $r_i$. 
 
The \emph{character} of $\rho\in R(K)$ is a map $\chi_\rho\colon G\to \C$ defined by $\chi_\rho(g) = \tr(\rho(g))$, and the \emph{character variety} $X(K)$ is the set of characters of representations in $R(K)$.

For $g\in G$, we define a function $\tau_g\colon R(K)\to \C$ by $\tau_g(\rho) =\tr (\rho(g))=\chi_\rho(g)$. 
Let $T$ be the ring generated by $\tau_g$ for all $g\in G$. By \cite[Proposition~1.4.1]{CS83}, $T$ is finitely generated.
Let $g_1,\ldots, g_m$ be elements in $G$ such that $\tau_{g_i}$, $1\le i\le m$, generate $T$, and define a regular map $\tau\colon R(K)\to \C^m$ by $\tau=(\tau_{g_1},\ldots, \tau_{g_m})$. Then, there is a one-to-one correspondence between $X(K)$ and $\tau(R(K))$; throughout the paper we identify $X(K)$ with $\tau(R(K))$. 

The character variety $X(K)$ is an affine algebraic set in~$\C^m$ \cite[Corollary~1.4.5]{CS83}.
We note that $\SL(2, \C)$ acts on $\C^{4n}$ by conjugation: we have 
\[
\C^{4n} = \M(2,\C)^n=\left\{\left(A_1, \ldots, A_n\right)\mid A_i\in \M(2, \C), 1\le i\le n\right\},
\]
and $P\in \SL(2, \C)$ acts by 
\[
P\cdot \left(A_1, \ldots, A_n\right) = \left(PA_1P^{-1}, \ldots, PA_nP^{-1}\right).
\]
The conjugation action of $\SL(2, \C)$ on $R(K)$ induces an action on the coordinate ring $\C[R(K)]$, and the coordinate ring $\C[X(K)]$ is $\C[R(K)]^{\SL(2,\C)}$, the ring of regular functions on $R(K)$ that are invariant under the conjugation action of $\SL(2,\C)$.

If $\chi_\rho=\chi_{\rho'}$ and $\rho$ is irreducible, then $\rho'$ is irreducible \cite[Proposition~1.5.2]{CS83}; we say that a character $\chi_\rho\in X(K)$ is \emph{irreducible} (respectively, \emph{reducible}) if $\rho\in R(K)$ is irreducible (respectively, reducible). 

Let $R^\irr(K)$ and $R^\red(K)$ denote the set of irreducible $\rho\in R(K)$ and the set of reducible $\rho\in R(K)$, respectively. 
Let $X^\irr(K)$ denote the Zariski closure of $\tau(R^\irr(K))$ and $X^\red(K)$ denote $\tau(R^\red(K))$. Then, $X^\irr(K)$ and $X^\red(K)$ are affine algebraic sets, and $X^\red(K)$ is the same as the set of characters of abelian $\rho\in R(K)$ and has dimension one over $\C$. It is known that a subset of $X^\irr(K)$ is an irreducible component of $X^\irr(K)$ if and only if it is an irreducible component of $X(K)$ that contains an irreducible character.


\section{Twisted Alexander polynomials}\label{section:TAP}

In this section, we define the twisted Alexander polynomial of a knot following Wada \cite{Wad94}, and briefly review its properties.
For a knot $K$ in $S^3$, recall that $G=\pi_1(S^3\setminus K)$. Suppose that~$G$ has a presentation $G = \langle x_1,\ldots, x_n\mid r_1,\ldots,r_{n-1}\rangle$. 
Let $\alpha\colon G\to \Z=\la t \ra$ be a surjective homomorphism. 
For $\rho\in R(K)$, we have a group homomorphism $\rho\otimes \alpha\colon G\to \GL(2, \C[t^{\pm 1}])$, and it extends to a ring homomorphism $\Phi\colon \Z[G] \to \M(2, \C[t^{\pm 1}])$, 
where $\M(k,\C[t^{\pm 1}])$ denotes the ring of $k\times k$ matrices with entries in $\C[t^{\pm 1}]$.

Let $M$ be the $(n-1)\times n$ matrix whose $(i,j)$-entry is $\frac{\partial r_i}{\partial x_j}\in \Z[G]$, where $\frac{\partial\phantom{x}}{\partial x_j}$ denotes the Fox derivative \cite{Fox53}. 
Let $M_k$ be the $(n-1)\times (n-1)$ matrix obtained by deleting the $k$th column of $M$ for $1\le k\le n$, and let $\Phi(M_k)$ denote the $2(n-1)\times 2(n-1)$ matrix with entries in $\C[t^{\pm 1}]$ obtained from $M_k$ by replacing each entry $m_{ij}$ by $\Phi(m_{ij})$. 
That is, $\Phi(M_k)\in M(2(n-1), \C[t^{\pm 1}])$. 
For $k$ such that $\alpha(x_k)\ne 1$, we define the \emph{twisted Alexander polynomial $\Delta_{K,\rho}(t)\in \C(t)$ associated with $\rho\in R(K)$} by 
\begin{equation}\label{equation:TAP}
\Delta_{K,\rho}(t) = \frac{\det\Phi(M_k)}{\det \Phi(x_k-1)},
\end{equation}
which is well-defined up to multiplication by $t^{2i}$, $i\in \Z$ and independent of the choice of such $k$. If a representation $\rho\in R(K)$ is nonabelian, then $\Delta_{K,\rho}(t)$ is a (Laurent) polynomial, i.e. $\Delta_{K,\rho}(t)\in \C[t^{\pm 1}]$  \cite[Theorem~3.1]{KM05}.

For $\rho, \rho'\in R(K)$, it is known that if $\chi_\rho = \chi_{\rho'}$, then $\Delta_{K,\rho}(t) = \Delta_{K,\rho'}(t)$ \cite[Section~2.2]{KKM13}. 
Thus, we can define the \emph{twisted Alexander polynomial $\Delta_{K,\chi}(t)$ of $K$ associated with $\chi\in X(K)$} by $\Delta_{K,\chi}(t) =\Delta_{K,\rho}(t)$ for any $\rho\in R(K)$ such that $\chi=\chi_\rho$. 

If $\chi\in X^\irr(K)$ is an irreducible character, then it is the character of an irreducible and hence nonabelian $\rho\in R(K)$. 
If $\chi\in X^\irr(K)$ is a reducible character, then it is the character of a nonabelian reducible $\rho\in R(K)$ by \cite[Section~6]{CCG94}.
 Therefore, if $\chi\in X^\irr(K)$, then it is the character of a nonabelian representation, and we have $\Delta_{K,\chi}(t)\in \C[t^{\pm 1}]$ by \cite[Theorem~3.1]{KM05}. 
In this case, we say that the character $\chi$ is \emph{monic} if $\Delta_{K,\chi}(t)$ is monic, that is, its coefficient of the highest-degree term is one.

Let $X_0$ be an irreducible component of $X^\irr(K)$.
By \cite[Proposition~1.4.4]{CS83}, there exists an irreducible component $R_0$ of $R(K)$ such that $\tau(R_0)=X_0$.
As shown in the proof of \cite[Theorem~1.5]{DFJ12}, for some $N$ there exist regular functions $\phi_i\colon R_0\to \C, ~ -N\le i\le N$ such that for all $\rho\in R_0$
\begin{equation}\label{equation:TAP-rep}
	\Delta_{K,\rho}(t) = \sum_{i=-N}^N\phi_i(\rho)t^i
\end{equation}
without any ambiguity. Moreover, these functions $\phi_i$ descend to regular functions on $X_0$.
Thus, there exist regular functions $\psi_i$ on $X_0$ such that for $\chi\in X_0$ we have
\begin{equation}\label{equation:TAP-chi}
	\Delta_{K,\chi}(t) = \sum_{i=-N}^N\psi_i(\chi)t^i.
\end{equation}
We call $\psi_m$ the \emph{leading-coefficient function of $\Delta_{K,\chi}(t)$ on $X_0$} if $\psi_m\not\equiv 0$ and $\psi_k\equiv 0$ for all $k>m$ on $X_0$.


\section{Upper bounds on the number of monic characters}\label{section:bounds}

For an irreducible affine algebraic set, we define its \emph{degree} to be the degree of its projective closure, and for a reducible affine algebraic set we define its \emph{degree} to be the sum of the degrees of its irreducible components; here irreducible components of all dimensions are included in the sum. 
We denote the degree of an affine algebraic set $V$ by $\deg(V)$.  
Recall that if $f$ is a nonzero nonconstant polynomial on $\C^n$ and $V(f)$ is the set of zeros of $f$ in 
$\C^n$, then 
\[
\deg(V(f))\le \deg(f).
\]

We will use the following lemmas.
\begin{lemma}[{\cite[Theorem~1]{Hei83}, \cite{Hei85}}](B\'{e}zout's inequality)\label{lemma:Bezout inequality}
If $X$ and $Y$ are affine algebraic sets in $\C^n$, then $\deg(X\cap Y)\le \deg(X)\cdot \deg(Y)$.
\end{lemma}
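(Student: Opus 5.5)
The plan follows Heintz's approach. We reduce the intersection $X\cap Y$ to a linear section of the product $X\times Y$, and then show separately that products and linear sections behave well with respect to degree. Throughout, $\deg$ is the affine degree defined above: the sum over all irreducible components, of every dimension.

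\emph{Step 1 (reduction to irreducible sets).} Write $X=\bigcup_i X_i$ and $Y=\bigcup_j Y_j$ as unions of irreducible components. Then $X\cap Y=\bigcup_{i,j}(X_i\cap Y_j)$. Every irreducible component of $X\cap Y$ is contained in some $X_i\cap Y_j$, and since it is maximal in $X\cap Y$ it is a component of that $X_i\cap Y_j$. Hence
\[
\deg(X\cap Y)\le\sum_{i,j}\deg(X_i\cap Y_j).
\]
So it suffices to prove $\deg(X_i\cap Y_j)\le \deg X_i\cdot\deg Y_j$ for irreducible $X_i$ and $Y_j$, and then sum.

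\emph{Step 2 (linear sections do not increase degree).} For an irreducible $V\subset\C^N$ of dimension $r$, I will use the characterization of $\deg V$ as the number of points of $V\cap E$ for a generic affine subspace $E$ of codimension $r$. This number is also the maximum of $\#(V\cap E)$ over all $E$ of codimension $r$ for which the intersection is finite.

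Let $H$ be an affine hyperplane. If $V\subset H$, then $V\cap H=V$ and there is nothing to prove. Otherwise every component of $V\cap H$ has dimension $r-1$. Choose a generic $E'$ of codimension $r-1$ that meets each such component $W$ in exactly $\deg W$ points. Then
\[
\sum_W \deg W=\#\bigl(V\cap (H\cap E')\bigr)\le \deg V,
\]
where the last inequality holds because $H\cap E'$ has codimension $r$ and the intersection is finite.

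Now let $L$ be any affine linear subspace, written as an intersection of hyperplanes $H_1,\dots,H_s$, and cut by them one at a time. Each component of $V\cap H_1\cap\dots\cap H_k$ is a component of $W\cap H_k$ for some component $W$ of the previous stage. Applying the hyperplane case componentwise shows that the total degree never increases. Hence $\deg(V\cap L)\le\deg V$, including lower-dimensional components.

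\emph{Step 3 (diagonal trick and products).} Identify $X_i\cap Y_j$ with $(X_i\times Y_j)\cap\Delta$, where $\Delta\subset\C^{2n}$ is the diagonal, a linear subspace. This identification is via a linear isomorphism $\Delta\cong\C^n$, which preserves degree. By Step 2,
\[
\deg(X_i\cap Y_j)\le\deg(X_i\times Y_j).
\]
It remains to show $\deg(X_i\times Y_j)\le \deg X_i\cdot\deg Y_j$.

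I expect this product bound to be the main obstacle. The natural test subspaces $E_1\times E_2$ are not generic in $\C^{2n}$, and a special section only gives an inequality in the wrong direction.

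My first attempt is to pass to projective closures. Embed $\C^{2n}\subset\mathbb{P}^{2n+1}$ so that the projective closure of $X_i\times Y_j$ lies in the join $J(\overline{X_i},\overline{Y_j})$ of the closures, placed in two disjoint linear subspaces $\mathbb{P}^n$. The join is irreducible of dimension $\dim X_i+\dim Y_j+1$ and has degree $\deg X_i\cdot\deg Y_j$. The affine product is recovered as the intersection of the affine cone on the join with a hyperplane section, namely the slice where the two homogenizing coordinates agree, so Step 2 applied to this hyperplane gives the bound.

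If this bookkeeping becomes awkward, my fallback is the bihomogeneous Hilbert polynomial. Its leading coefficient for $X_i\times Y_j$ is the product of the leading coefficients for $X_i$ and $Y_j$, and one then compares it with the degree of the closure in $\mathbb{P}^{2n}$.
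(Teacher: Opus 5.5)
The paper does not prove this lemma; it quotes Heintz~\cite[Theorem~1]{Hei83}. Your proposal is a correct proof along the lines of Heintz's own argument: reduce to irreducible pieces, use $X_i\cap Y_j\cong(X_i\times Y_j)\cap\Delta$, show that degree does not increase under linear sections, and bound the degree of a product. What you add over the paper is an explicit argument. The cost is two standard inputs that you quote rather than prove.
\begin{itemize}
\item $\deg\overline{V}$ is both the generic and the maximal number of points of a finite complementary-dimensional affine section. In Heintz's setup the maximum is the definition; the real work is that it is attained generically and equals the projective-closure degree the paper uses.
\item The join of varieties placed in disjoint complementary subspaces has degree $\deg\overline{X_i}\cdot\deg\overline{Y_j}$, which follows from $S(J)=S(\overline{X_i})\otimes S(\overline{Y_j})$ and Hilbert functions.
\end{itemize}

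Three points of bookkeeping need tightening.
\begin{itemize}
\item In Step~2, choose $E'$ generic enough to also miss the pairwise intersections of the components $W$, so that the point counts add.
\item In the product step, cutting the affine cone $C(J)=C(\overline{X_i})\times C(\overline{Y_j})\subset\C^{2n+2}$ by the linear hyperplane $x_0=y_0$ only produces another cone. What you want is the hyperplane $y_0=1$ inside the chart $x_0=1$ of $\mathbb{P}^{2n+1}$. In that chart $J$ becomes $X_i\times C(\overline{Y_j})$, and the slice is exactly $X_i\times Y_j$. Equivalently, take the codimension-two slice $x_0=y_0=1$ of $C(J)$. Since that affine piece of $J$ (resp.\ $C(J)$) has degree $\deg J$, Step~2 then gives $\deg(X_i\times Y_j)\le\deg X_i\cdot\deg Y_j$.
\item The bihomogeneous fallback is unnecessary, and the ``comparison'' it defers is the whole content. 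The bigraded leading coefficient of $\overline{X_i}\times\overline{Y_j}\subset\mathbb{P}^n\times\mathbb{P}^n$ does not by itself give the degree of the closure in $\mathbb{P}^{2n}$. For instance, the Segre image has degree $\binom{r+s}{r}\deg X_i\cdot\deg Y_j$, where $r=\dim X_i$ and $s=\dim Y_j$. Summing the bigraded Hilbert function over $a+b=d$ just reproduces the join computation.
\end{itemize}
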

\begin{lemma}\label{lemma:Bezout lemma}
Let $V$ be an affine algebraic set in $\C^n$. Let $d\ge 1$, and let $f_i$ be a polynomial on~$\C^n$ of degree at most $d$, $1\le i\le m$. Let $Z$ be an irreducible component of $V\cap V(f_1,\ldots, f_m)$ and let $c=\dim(V) - \dim(Z)$. Then, we have
\begin{equation}\label{equation:Bezout lemma}
\deg(Z)\le \deg(V)\cdot d^c.
\end{equation}
\end{lemma}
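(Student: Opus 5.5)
We prove Lemma~\ref{lemma:Bezout lemma} by a generic linear combination argument: we replace the $m$ equations $f_1,\ldots,f_m$ by $c$ suitably chosen generic linear combinations, so that $Z$ becomes an irreducible component of an intersection of $V$ with only $c$ hypersurfaces of degree at most $d$. Then Lemma~\ref{lemma:Bezout inequality}, applied $c$ times, gives the bound.

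\textbf{Step 1: localize.} Let $V_1,\ldots,V_s$ be the irreducible components of $V$. The irreducible set $Z$ lies in some $V_j$ and is an irreducible component of $V_j\cap V(f_1,\ldots,f_m)$. Put $c_j=\dim V_j-\dim Z\le c$. Since $\deg(V_j)\le\deg(V)$ and $d\ge1$ gives $d^{c_j}\le d^c$, it suffices to treat the case where $V$ is irreducible. We also discard any $f_i$ that vanishes identically on $V$; if all of them do, then $Z=V$, $c=0$, and there is nothing to prove.

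\textbf{Step 2: cut down one dimension at a time.} We build polynomials $g_1,\ldots,g_c$ inductively, each a generic $\C$-linear combination of the $f_i$ and hence of degree at most $d$. We arrange that for each $k$, $Z$ lies in an irreducible component $W_k$ of $V\cap V(g_1,\ldots,g_k)$ with $\dim W_k=\dim V-k$.

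Suppose $W_{k-1}$ has been constructed with $k\le c$. Then $\dim W_{k-1}>\dim Z$, so $W_{k-1}\not\subset V(f_1,\ldots,f_m)$. Otherwise $W_{k-1}$ would be an irreducible subset of $V\cap V(f)$ strictly containing the component $Z$, which is impossible. Hence some $f_i$ is nonzero on $W_{k-1}$.

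Now consider the finitely many irreducible components $W'$ of $V\cap V(g_1,\ldots,g_{k-1})$ that contain $Z$, together with those that are not contained in $V(f)$. For each such $W'$, the combinations $g=\sum\lambda_if_i$ that vanish identically on $W'$ form a proper linear subspace of coefficient vectors. Choose $\lambda$ outside the finite union of these subspaces and set $g_k=\sum\lambda_if_i$.

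By Krull's principal ideal theorem, each component of $W_{k-1}\cap V(g_k)$ has dimension exactly $\dim W_{k-1}-1$, and $Z$ lies in one of them, which we call $W_k$. Since $g_k$ vanishes on no component of $V\cap V(g_1,\ldots,g_{k-1})$, the set $W_k$ is in fact a component of $V\cap V(g_1,\ldots,g_k)$.

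\textbf{Step 3: conclude.} After $c$ steps, $\dim W_c=\dim Z$ and $Z\subset W_c$, so $Z=W_c$ because $W_c$ is irreducible. Thus $Z$ is an irreducible component of $V\cap V(g_1)\cap\cdots\cap V(g_c)$. Applying Lemma~\ref{lemma:Bezout inequality} $c$ times, together with $\deg V(g_k)\le d$, gives
\[
\deg(Z)\le\deg\bigl(V\cap V(g_1,\ldots,g_c)\bigr)\le\deg(V)\,d^c .
\]
Here we use that the degree of a set dominates the degree of each of its irreducible components.

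\textbf{Main obstacle.} The delicate point is in Step 2: we must ensure that the chosen component $W_k$ containing $Z$ really has dimension $\dim V-k$, rather than being an excess component. This is handled by the observation that no intermediate $W_{k-1}$ can lie inside $V(f)$ as long as its dimension exceeds $\dim Z$, which holds precisely because $Z$ is a component of $V\cap V(f)$.
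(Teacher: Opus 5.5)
Your argument reaches the right conclusion, but the last sentence of Step~2 is wrong as written. You claim that $g_k$ vanishes on no component of $V\cap V(g_1,\ldots,g_{k-1})$. This cannot be arranged in general, because a component lying inside $V(f_1,\ldots,f_m)$ is annihilated by every linear combination of the $f_i$. For example, take $V=\C^3$, $f_1=xz$, $f_2=yz$ and $Z=\{x=y=0\}$, so $c=2$. Every $V(g_1)$ then has the plane $\{z=0\}\subset V(f_1,f_2)$ as a component, and every $g_2$ vanishes on it. Even granting that premise, concluding that $W_k$ is a component of $V\cap V(g_1,\ldots,g_k)$ needs control of the dimensions of the other components, which your invariant does not track.

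The conclusion itself is true and can be repaired as follows. Your choice of $\lambda$ gives, by induction and Krull, that every component of $V\cap V(g_1,\ldots,g_j)$ not contained in $V(f_1,\ldots,f_m)$ has dimension exactly $\dim V-j$. Suppose $Y\supsetneq W_k$ were a component. Then $Y$ strictly contains $Z$, so it does not lie in $V(f_1,\ldots,f_m)$. Hence $\dim Y=\dim V-k=\dim W_k$, which is absurd for irreducible $Y\supsetneq W_k$.

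More economically, you never need $Z$ to be a component of the global intersection. Since $W_k$ is a component of $W_{k-1}\cap V(g_k)$, Lemma~\ref{lemma:Bezout inequality} gives $\deg W_k\le \deg W_{k-1}\cdot d$ directly, and iterating yields $\deg Z=\deg W_c\le \deg V\cdot d^c$. This is exactly the paper's proof, which also drops genericity. At each step it takes a single original $f_{i_k}$ not vanishing identically on the irreducible $V_{k-1}$; one exists because $V_{k-1}\supsetneq Z$ and $Z$ is a component of $V_{k-1}\cap V(f_1,\ldots,f_m)$. It then passes to a component of $V_{k-1}\cap V(f_{i_k})$ containing $Z$ and applies B\'ezout to that irreducible set.

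What your generic-combination route buys, once repaired, is a stronger statement. Your choice of $\lambda$ is in effect independent of $Z$: for $k\le c$, every component containing $Z$ has dimension at least $\dim V-k+1>\dim Z$, so it is already among those not in $V(f_1,\ldots,f_m)$. Hence, for irreducible $V$, the same $g_1,\ldots,g_c$ make every component of $V\cap V(f_1,\ldots,f_m)$ of dimension $\dim V-c$ a component of $V\cap V(g_1,\ldots,g_c)$. This gives the total bound $\sum\deg Z\le \deg V\cdot d^c$ over all such components. The paper only needs the single component $R_0$, for which the chain argument is shorter.
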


\begin{proof}
First suppose that $V$ is irreducible, and set $V_0=V$. 
If all $f_i$ are identically zero on~$V_0$, then $V_0\cap V(f_1,\ldots, f_m)=V_0$ and hence $Z\subset V_0$. 
This implies that $Z=V_0$ since $V_0$ is irreducible and~$Z$ is an irreducible component of $V_0$. 
Then, $\deg(Z) = \deg(V_0)$ and \eqref{equation:Bezout lemma} holds.

Otherwise, choose $f_{i_1}$ that is not identically zero on $V_0$. 
Let $V_1$ be an irreducible component of $V_0\cap V(f_{i_1})$ that contains $Z$. 
Then, $Z$ is an irreducible component of $V_1\cap V(f_1,\ldots, f_m)$.
By Krull's principal ideal theorem,
\[
\dim(V_1)= \dim(V_0)-1
\] 
since $V_0$ is irreducible and $f_{i_1}$ is not identically zero on~$V_0$.

We iterate this process and obtain irreducible affine algebraic sets $V_1,\ldots,V_c$ and polynomials $f_{i_1},\ldots, f_{i_c}$ such that $f_{i_k}~(1\le k\le c)$ is not identically zero on $V_{k-1}$, and $V_k$ is an irreducible component of $V_{k-1}\cap V(f_{i_k})$ that contains $Z$, and $\dim(V_k)= \dim(V_{k-1})-1$. 
Then, we have $Z\subset V_c$ and $\dim Z = \dim V_c$. It follows that $Z=V_c$ since $Z$ and $V_c$ are irreducible. 

By Lemma~\ref{lemma:Bezout inequality}, we have 
\[
\deg(V_k)\le \deg(V_{k-1}\cap V(f_{i_k}))\le \deg (V_{k-1})\cdot \deg(V(f_{i_k}))\le \deg (V_{k-1})
\cdot d.
\]
Therefore, we have $\deg(Z)\le \deg(V)\cdot d^c$. 

Now suppose that $V$ is reducible. Let $V_0$ be an irreducible component of $V$ that contains~$Z$. 
Then $Z$ is an irreducible component of $V_0\cap V(f_1,\ldots, f_m)$ and as above we have
\[
\deg(Z)\le \deg(V_0)\cdot d^{\dim(V_0)-\dim(Z)}.
\]
Since $\deg(V_0)\le \deg(V)$ and $\dim(V_0)\le \dim(V)$, we have
\[
\deg(V_0)\cdot d^{\dim(V_0)-\dim(Z)}\le \deg(V)\cdot d^c,
\]
and we obtain \eqref{equation:Bezout lemma}.
\end{proof}

Now, we give a proof of Theorem~\ref{theorem:main theorem presentation}.
\begin{proof}[{\bf Proof of Theorem~\ref{theorem:main theorem presentation}}] 

For a set $S$, let $\#S$ denote the cardinality of $S$.
Let~$\psi$ be the leading-coefficient function of $\Delta_{K,\chi}(t)$ on $X_0$.
In particular, $\psi\not\equiv  0$ on $X_0$, and by assumption $\psi\not\equiv 1$ on $X_0$.
For $z\in \C$, let $P_z = \{\chi\in X_0\mid \psi(\chi)=z\}$. The sets $P_0$ and $P_1$ are finite since they are proper Zariski closed subsets of a curve $X_0$. 
Then, we have 
\begin{equation}\label{equation:P_0 and P_1}
\{\chi\in X_0\mid \chi \text{ is monic}\}\, \subset\, P_0\cup P_1.
\end{equation}

\medskip

\noindent {\bf Proof of~(1).}
By \eqref{equation:P_0 and P_1}, it suffices to show that for each $z=0,1$, we have 
\[
\#P_z\le (n-1)2^{n+1}\ell^{3n-3},
\] 
and with an additional assumption that $u_i$ and $v_i$ are positive for all $i$, we have 
\[
\#P_z\le (n-1)(\ell-1)2^{n+1}\ell^{3n-4}.
\]

Let $\phi=\psi\circ (\tau|_{R_0})\colon R_0\to \C$, where $\tau\colon R(K)\to \C^m$ is the regular map introduced in Section~\ref{section:character variety}. 
Then, $\phi$ is the leading-coefficient function of $\Delta_{K,\rho}(t)$ in~\eqref{equation:TAP-rep} on~$R_0$. 
We need the following lemma.
\begin{lemma}\label{lemma:regular extension to the ambient space}
	There exists a polynomial map $\overline{\phi}\colon \C^{4n}\to \C$ that restricts to $\phi$ on $R_0$ and has $\deg(\overline{\phi})\le 2(n-1)\ell$. Furthermore, if the words $u_i$ and $v_i$ on $x_1,\ldots, x_n$ are positive, then $\deg(\overline{\phi})\le 2(n-1)(\ell-1)$.
\end{lemma}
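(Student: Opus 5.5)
Fix $k$ with $\alpha(x_k)\neq 1$; take $k=1$ after reindexing, since every meridian generator maps nontrivially. For each generator write $\alpha(x_j)=t^{e_j}$. The plan is to show that the numerator $\det\Phi(M_1)$ of \eqref{equation:TAP} is a Laurent polynomial in $t$ whose coefficients are polynomials in the $4n$ coordinates $a_j,b_j,c_j,d_j$ of degree at most $2(n-1)\ell$, or $2(n-1)(\ell-1)$ in the positive case. We then divide by the denominator to read off the leading coefficient.

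\textbf{Degree of the numerator.} Write $r_i=u_iv_i^{-1}$. Then
\[
\frac{\partial r_i}{\partial x_j}=\frac{\partial u_i}{\partial x_j}-u_iv_i^{-1}\frac{\partial v_i}{\partial x_j}.
\]
On $R(K)$ we have $\rho(u_iv_i^{-1})=I$. So after applying $\Phi$, the entry agrees on $R_0$ with $\Phi\bigl(\partial u_i/\partial x_j\bigr)-\Phi\bigl(\partial v_i/\partial x_j\bigr)$.

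For a word $w=y_1\cdots y_s$ with $s\le \ell$ and each $y_p=x_{j_p}^{\pm1}$, the Fox derivative is a signed sum of prefixes $y_1\cdots y_{p-1}$ (or $y_1\cdots y_p$ when $y_p$ is inverted). Since we are in $\SL(2,\C)$, the matrix $\rho(x_j^{-1})$ is the adjugate of $\rho(x_j)$, and its entries are again linear in the coordinates. Hence each prefix gives a matrix whose entries are polynomials of degree at most $\ell$, times a monomial in $t$.

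In the positive case only the prefixes $y_1\cdots y_{p-1}$ occur. These have length at most $\ell-1$, so the degree is at most $\ell-1$.

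Thus $\Phi(M_1)$ is a $2(n-1)\times 2(n-1)$ matrix with entries in $\C[a,b,c,d][t^{\pm1}]$ of coordinate-degree at most $\ell$ (respectively $\ell-1$). Its determinant therefore has coefficients of degree at most $2(n-1)\ell$ (respectively $2(n-1)(\ell-1)$).

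\textbf{Dividing by the denominator.} We have
\[
\det\Phi(x_1-1)=t^{2e_1}-\tr(\rho(x_1))\,t^{e_1}+1.
\]
This polynomial is monic in $t$ with constant leading coefficient. Since $\Delta_{K,\rho}$ is a Laurent polynomial on $R_0$ by \cite[Theorem~3.1]{KM05}, its leading coefficient equals the leading coefficient of the numerator, up to sign if $e_1=-1$.

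More precisely, let $D$ be the largest $t$-degree whose coefficient polynomial $\Phi_D$ in the numerator is not identically zero on $R_0$. Then $\phi=\pm\Phi_D|_{R_0}$, and we set $\overline{\phi}=\pm\Phi_D$. A point to check is that the top $t$-degree of $\Delta_{K,\rho}$ on $R_0$ is generically constant and matches the indexing in \eqref{equation:TAP-rep}. Here we use that the normalization is fixed uniformly on $R_0$, as in \cite{DFJ12}, with the same $k$ and the same monomial shift. The definition of $\phi$ as the top nonzero coefficient on $R_0$ then aligns with $\Phi_D$.

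\textbf{Main obstacle.} The main obstacle is this bookkeeping: ensuring that the ambiguity $t^{2i}$ and the replacement of $\partial r_i/\partial x_j$ by the $u$/$v$ form do not change the coefficient functions on $R_0$. They do not, because both modifications are identities on $R(K)\supset R_0$ once $k$ and the presentation are fixed.
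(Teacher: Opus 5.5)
Your proposal is correct and follows essentially the same route as the paper. Both arguments use $\rho(u_iv_i^{-1})=I$ to replace $\partial r_i/\partial x_j$ by $\partial u_i/\partial x_j-\partial v_i/\partial x_j$, then substitute $A_k t^{\alpha(x_k)}$ and the adjugate $A_k^*t^{-\alpha(x_k)}$ to get a formal $2(n-1)\times 2(n-1)$ matrix with entries of coordinate-degree at most $\ell$ (or $\ell-1$ in the positive case), and finally read off the leading coefficient because the denominator $\det\Phi(x_k-1)$ is monic.

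Two small corrections, neither affecting the result. First, the existence of $k$ with $\alpha(x_k)\neq 1$ follows from the surjectivity of $\alpha$, not from the generators being meridians, since the generators of an arbitrary presentation need not be meridians. Second, no sign ambiguity arises: the top coefficient of $t^{2e}-\tr\rho(x_k)\,t^{e}+1$ is $1$ for either sign of $e$, so you may take $\overline{\phi}=\Phi_D$ directly.
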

We postpone the proof of Lemma~\ref{lemma:regular extension to the ambient space} to the end of Proof of~(1).

Fix $z\in \{0,1\}$. 
The set $P_z\subset X_0$ is finite, and we let $P_z=\{p_1,\ldots,p_r\}$ and let \[
Y=(\tau|_{R_0})^{-1}(P_z).
\] 

Since $P_z$ has $r$ connected components and $\tau|_Y\colon Y\to P_z$ is continuous and surjective, $Y$ has at least $r$ connected components with respect to the Euclidean topology.
Every irreducible component of a complex affine algebraic set is Euclidean connected (see \cite[VII, Section~2.2]{Sha77}); hence $Y$ has at least $r$ irreducible components.
Recall that $\deg(Y)$ is the sum of the degrees of irreducible components of $Y$. 
Since the degree of an irreducible component is at least one, it follows that $\#P_z = r\,\le\, \deg (Y)$. 

One can readily see that 
\[Y=(\tau|_{R_0})^{-1}(P_z)=R_0\cap V(\overline{\phi}-z),
\] 
where $V(\overline{\phi}-z)$ is the zero set of the polynomial $\overline{\phi}-z$ in $\C^{4n}$. 
By Lemma~\ref{lemma:Bezout inequality}, we have 
\[
\deg(Y)\,= \,\deg(R_0\cap V(\overline{\phi}-z))\,\le\, \deg(R_0)\cdot \deg(V(\overline{\phi}-z)),
\]
and $\deg(V(\overline{\phi}-z)) \le \deg(\overline{\phi}-z)=\deg(\overline{\phi})$, 
where the equality holds since $\overline{\phi}\not\equiv z$.

By Lemma~\ref{lemma:regular extension to the ambient space}, it suffices to show 
\begin{equation}\label{equation:R_0}
\deg(R_0)\le 2^n\ell^{3n-4}.
\end{equation}
Recall that $\C^{4n}$ is equipped with coordinate $a_1,b_1,c_1,d_1,\ldots, a_n,b_n,c_n,d_n$. Recall that $R_0\subset R(K)\subset \C^{4n}$ and $R(K)$ is defined by $n$ equations
\[
\det \begin{pmatrix}
	a_i & b_i\\
	c_i & d_i
\end{pmatrix}=1, \quad 1\le i\le n,
\] 
and $4(n-1)$ equations obtained from the relations $r_i$, $1\le i\le n-1$. We explain these $4(n-1)$ equations in detail.

Each relation $r_i$ has the form $u_i=v_i$, where $u_i$ and $v_i$ are words on $x_1^{\pm 1},\ldots, x_n^{\pm 1}$ of lengths at most $\ell$. 
For $1\le k\le n$, let $A_k$ and $A_k^*$ be $2\times 2$ matrices defined by 
\[
A_k=\begin{pmatrix}
	a_k & b_k\\
	c_k & d_k
\end{pmatrix},
\quad
A_k^*=\begin{pmatrix}
	d_k & -b_k\\
	-c_k & a_k
\end{pmatrix}.
\]
We note that if $\rho\in R(K)\subset \SL(2, \C)^n$, we have
\[
A_k(\rho)^*=\begin{pmatrix}
	d_k(\rho) & -b_k(\rho)\\
	-c_k(\rho) & a_k(\rho)
\end{pmatrix}=
A_k(\rho)^{-1}.
\]
Let $u_i(A)$ (respectively, $v_i(A)$) denote the $2\times 2$ matrix with entries in 
\[
\C[\C^{4n}] =\C[a_1, b_1, c_1, d_1, \ldots, a_n, b_n, c_n, d_n]
\] 
obtained by replacing $x_k$ and $x_k^{-1}$ in $u_i$ (respectively, $v_i$) by $A_k$ and $A_k^*$ for all~$k$. 
Then, from four entries of each $u_i(A)-v_i(A)$, $1\le i\le n-1$, we obtain four polynomials in $a_1, b_1, c_1, d_1, \ldots, a_n, b_n, c_n, d_n$. 
We remark that these $4(n-1)$ polynomials have degrees  at most $\ell$. 
Since $\ell\ge 2$, all the above equations defining $R(K)$, including the determinant equations, have degrees at most $\ell$.

Recall that $R(K)\subset \SL(2,\C)^n$.  
Let 
\[
f_i= a_id_i-b_ic_i-1, \quad1\le i\le n, 
\]
and let $f_{n+1},\ldots, f_{5n-4}$ be the $4(n-1)$ equations obtained from the relations $r_i$, $1\le i\le n-1$. 
Then, $\SL(2, \C)^n= V(f_1,\ldots, f_n)$ and $R(K) = V(f_1,\ldots, f_{5n-4})$.
Since $\deg(f_i)\le 2$, $1\le i\le n$, by Lemma~\ref{lemma:Bezout inequality} we have
\[
\deg (\SL(2,\C)^n)  \le \deg(V(f_1))\cdots \deg(V(f_n)) 
				  \le \deg(f_1)\cdots \deg(f_n)
				  \le  2^n.
				  \]

Note that $\deg(f_i)\le \ell$, $n+1\le i\le 5n-4$, and $R_0\subset R(K) = \SL(2,\C)^n\cap V(f_{n+1},\ldots, f_{5n-4})$. Moreover, we have $\dim (\SL(2, \C))=3$, and hence $\dim (\SL(2,\C)^n)=3n$.
Since $\dim X_0=1$, it follows from \cite[Corollary~1.5.3]{CS83} that $\dim R_0=4$. 
We apply Lemma~\ref{lemma:Bezout lemma} with $V=\SL(2,\C)^n$ and $Z=R_0$, and obtain the desired inequality 
\[
\deg(R_0)\le \deg(\SL(2,\C)^n)\cdot \ell^{3n-4} \le 2^n\ell^{3n-4}.
\]

We complete Proof of~(1) by giving the proof of Lemma~\ref{lemma:regular extension to the ambient space} below.
\begin{proof}[Proof of Lemma~\ref{lemma:regular extension to the ambient space}]
After relabeling the generators if necessary, we may assume that $\alpha(x_n)\ne~1$. From \eqref{equation:TAP}, 
we have 
\[
\Delta_{K,\rho}(t) = \frac{\det\Phi(M_n)}{\det \Phi(x_n-1)},
\]
and $\det \Phi(x_n-1) = t^{2\alpha(x_n)} - (a_n+d_n)t^{\alpha(x_n)} +1$. 
Since the denominator is monic, $\phi$ is therefore the leading-coefficient function of $\det \Phi(M_n)$ on $R_0$. 

Recall that $M_n=(m_{ij})\in \M(n-1, \Z[G])$ with $m_{ij}=\frac{\partial r_i}{\partial x_j}\in \Z[G], ~ 1\le i,\, j\le n-1$, 
and $\Phi(M_n) = (\Phi(m_{ij}))\in \M(2(n-1), \C[t^{\pm 1}])$.

The Fox derivative satisfies the following rules.

\begin{enumerate}
	\item[(R1)] $\displaystyle\frac{\partial x_i}{\partial x_j}=\delta_{ij}$, where $\delta_{ij}$ is the Kronecker delta.
	\item[(R2)] $\displaystyle\frac{\partial (uv)}{\partial x_i}=\frac{\partial u}{\partial x_i} + u\frac{\partial v}{\partial x_i}$ for $u, v\in G$.
	\item[(R3)] $\displaystyle\frac{\partial u^{-1}}{\partial x_i} = -u^{-1}\frac{\partial u}{\partial x_i}$.
\end{enumerate}
Each $r_i$ is a relation $u_i=v_i$ and as a relator it is $u_iv_i^{-1}$. An easy calculation shows that 
\[
m_{ij}=\frac{\partial (u_iv_i^{-1})}{\partial x_j} = \frac{\partial u_i}{\partial x_j}  - u_iv_i^{-1}\frac{\partial v_i}{\partial x_j}.
\]
The relator $u_iv_i^{-1}$ is trivial in $G$, and it follows that $\Phi(u_iv_i^{-1}) = I_2\in \M(2, \C[t^{\pm 1}])$. 
Thus, we have
\[
\Phi(m_{ij}) = \Phi\left(\frac{\partial (u_iv_i^{-1})}{\partial x_j}\right) = \Phi\left(\frac{\partial u_i}{\partial x_j}\right) - \Phi\left(\frac{\partial v_i}{\partial x_j}\right)
= \Phi\left(\frac{\partial u_i}{\partial x_j} - \frac{\partial v_i}{\partial x_j}\right).
\]
It follows from (R1)--(R3) that each $\frac{\partial u_i}{\partial x_j} - \frac{\partial v_i}{\partial x_j}$ is a sum of words on $x_1^{\pm 1},\ldots, x_n^{\pm 1}$ of lengths at most $\ell$, and furthermore if $u_i$ and $v_i$ are positive, then the lengths are at most $\ell-1$.

Let $\left( \frac{\partial u_i}{\partial x_j} - \frac{\partial v_i}{\partial x_j} \right)(A)$ be the $2\times 2$ matrix obtained from $\frac{\partial u_i}{\partial x_j} - \frac{\partial v_i}{\partial x_j}$ by replacing $x_k$ and $x_k^{-1}$ in $\frac{\partial u_i}{\partial x_j} - \frac{\partial v_i}{\partial x_j}$ by $A_k t^{\alpha(x_k)}$ and $A_k^* t^{-\alpha(x_k)}$, respectively, for all $k$. 

Let $N\in \M(2(n-1), (\C[\C^{4n}])[t^{\pm 1}])$ be the matrix obtained from $M_n$ by replacing $m_{ij}$ by $\left( \frac{\partial u_i}{\partial x_j} - \frac{\partial v_i}{\partial x_j} \right)(A)$. 
Then, we have $\det(N)\in (\C[\C^{4n}])[t^{\pm 1}]$, a (Laurent) polynomial in $t$ with coefficients in $\C[\C ^{4n}]$. 
Note that each entry of $A_k$ and $A_k^*$ is a polynomial in $a_k, b_k, c_k, d_k$ of degree one. 
Therefore, each entry of $N$ is a polynomial in $a_k, b_k, c_k, d_k$, $1\le k\le n$, of degree at most $\ell$, and degree at most $\ell-1$ if $u_i$ and $v_i$ are positive. 
It follows that each coefficient of $\det(N)$ is a polynomial in $a_k, b_k, c_k, d_k$, $1\le k\le n$, of degree at most $2(n-1)\ell$, and degree at most $2(n-1)(\ell-1)$ if $u_i$ and $v_i$ are positive.

Let $\overline{\phi}$ be the coefficient of $\det (N)$ whose restriction to $R_0$ is the leading-coefficient function~$\phi$. 
It follows that  $\overline{\phi}$ is a polynomial map on $\C^{4n}$ that restricts to $\phi$ on $R_0$, and this completes the proof.
\end{proof}

\noindent
{\bf Proof of~(2).}
Using the assumption that all $x_i$ are mutually conjugate, we will show that 
\begin{equation}\label{equation:R_0-x_i-meridian}
\deg(R_0)\le 2^n\ell^{2n-3}.
\end{equation}
The proof of~(2) then follows from the proof of~(1), with \eqref{equation:R_0} replaced by \eqref{equation:R_0-x_i-meridian}.

From the assumption that the generators $x_i$ are mutually conjugate, it follows that for each $\rho\in R(K)$, we have $\chi_\rho(x_1)=\chi_\rho(x_i)$, $2\le i\le n$.
Define the regular function $s\colon R(K)\to \C$ by 
\[
s(\rho)=\chi_\rho(x_1)=a_1(\rho)+d_1(\rho).
\]
Then, for each $\rho\in R(K)$, we have
\[
\rho(x_i) = \begin{pmatrix}
	a_i(\rho) & b_i(\rho) \\
	c_i(\rho) & s(\rho)-a_i(\rho)
\end{pmatrix},
\quad
1\le i\le n.
\] 

Let $W=\{(A_1,\ldots, A_n)\in \SL(2,\C)^n\mid \tr(A_1)=\cdots = \tr(A_n)\}\subset \C^{4n}$. 
Then, $R(K)\subset W$. 
We will show that $\dim(W)=2n+1$ and $\deg(W)\le 2^n$. 
Let $s$, $a_1,b_1, c_1,\ldots, a_n, b_n, c_n$ be the coordinates on $\C^{3n+1}$ and let 
\[
h_i=a_i(s-a_i)-b_ic_i-1,\quad 1\le i\le n,
\]
be polynomials of degree 2 on $\C^{3n+1}$. 

Let $Z=V(h_1,\ldots, h_n)\subset \C^{3n+1}$. 
Let $L$ be the linear subspace of $\mathrm{M}(2,\C)^n=\C^{4n}$ defined by $\tr(A_1)=\cdots =\tr(A_n)$.
Define a linear map $g\colon \C^{3n+1}\to L$ by
\[
g(s,a_1,b_1, c_1,\ldots, a_n, b_n, c_n) = (a_1, b_1, c_1, s-a_1, \ldots, a_n, b_n, c_n, s-a_n).
\]
Then $g$ is a linear isomorphism and $g(Z)=W$.
It follows that 
\[
\dim(Z) =\dim(W) \text{ and }\deg(Z)=\deg(W).
\]

We show that $\dim(Z)=2n+1$. 
Let $Z_0$ be an irreducible component of $Z$. 
Since $Z\subset \C^{3n+1}$ is defined by $n$ equations, Krull's height theorem gives 
\[
\dim(Z_0)\ge 2n+1.
\]
Define the map $p\colon Z\to \C$ by 
\[
p(s,a_1,b_1, c_1,\ldots, a_n, b_n, c_n)=s.
\]
For $s_0\in \C$ we have 
\[
p^{-1}(s_0)=\{s_0\}\times \prod_{i=1}^n V\big(a_i(s_0-a_i)-b_ic_i-1\big)\subset \{s_0\}\times \C^{3n},
\]
and each $V\big(a_i(s_0-a_i)-b_ic_i-1\big)\subset \C^3$ has dimension 2, $1\le i\le n$. 
It follows that $\dim(p^{-1}(s_0))=2n$ for all $s_0\in \C$. 
Since $\dim(Z_0)\ge 2n+1$, the restriction map $p|_{Z_0}$ is not constant.
Thus, by Krull's principal ideal theorem, for $s_0\in p(Z_0)$ we have 
\[
\dim(Z_0\cap p^{-1}(s_0)) = \dim(Z_0)-1.
\]
Since $Z_0\cap p^{-1}(s_0)$ is a subset of $p^{-1}(s_0)$, we have
\[
\dim(Z_0)-1 \le \dim(p^{-1}(s_0)) = 2n.
\]
Therefore, we have $\dim(Z_0)\le 2n+1$, and hence $\dim(Z_0)= 2n+1$.
Recall that the dimension of an affine algebraic set is the maximum dimension of its irreducible components.
Since every irreducible component of $Z$ has dimension $2n+1$, we have $\dim(Z)=2n+1$; hence $\dim(W)=2n+1$.

By Lemma~\ref{lemma:Bezout inequality}, we have 
\[
\deg(Z)\le \prod_{i=1}^n\deg(V(h_i))\le \prod_{i=1}^n\deg(h_i)\le 2^n.
\]
Therefore, $\deg(W)=\deg(Z)\le 2^n$.

Recall that $f_{n+1},\ldots, f_{5n-4}$ are the polynomials  obtained from the relations $r_i$. Thus, 
we have
\[
R_0\subset R(K)=W\cap V(f_{n+1},\ldots, f_{5n-4}).
\]
We apply Lemma~\ref{lemma:Bezout lemma} with $V=W$ and $Z=R_0$, and obtain the desired inequality 
\[
\deg(R_0)\le \deg(W)\cdot \ell^{\dim(W)-4}\le 2^n\ell^{2n-3}.\qedhere
\]
\end{proof} 

\begin{remark}\label{remark:same traces}
The proof above shows that the assumption in Theorem~\ref{theorem:main theorem presentation}(2) that $x_1,\ldots, x_n$ are mutually conjugate can be replaced by the weaker assumption that  
\[
\chi(x_1)=\cdots =\chi(x_n)
\]
for every $\chi\in X_0$.
\end{remark}


\begin{example}
Let $K=K(p,q)$ be a $2$-bridge knot, where $p,q$ are coprime odd integers such that $p>0$ and $-p<q<p$. Then, the character variety $X(K)$ is of dimension one \cite{Riley84-1}, and hence every irreducible component of $X(K)$ with positive dimension is a curve. Let $X_0$ be an irreducible component of $X(K)$ that contains the character of  an irreducible $\SL(2,\C)$-representation of $G=\pi_1(S^3\setminus K(p,q))$ and a nonmonic character. 

Since the crossing number of $K$ is less than or equal to $p$, we have 
\begin{equation}\label{equation:2-bridge}
\mathcal{N}=\# \{\chi\in X_0\mid \text{$\chi$ is monic}\}\le (p-1)2^{3p-1}
\end{equation}
by Corollary~\ref{corollary:main theorem crossing number}.

On the other hand, the knot group $G$ admits the presentation $G=\la x,y\,|\,wx=yw\ra$ where $w=x^{\epsilon_1}y^{\epsilon_2}\cdots x^{\epsilon_{p-2}}y^{\epsilon_{p-1}}$ and $\epsilon_i=(-1)^{[\frac{q}{p}i]},~1\leq i\leq p-1$. Since $x,y$ are conjugate, $n=2$, and $\ell=p$, we have 
\[
\mathcal{N}
\le (2-1)2^{2+2}p^{2\cdot2-2} = 16p^2
\] 
by Theorem~\ref{theorem:main theorem presentation}(2). 
This upper bound is sharper than (\ref{equation:2-bridge}) when $p\ge 3$. 

For a double twist knot $K(p,q)=J(k,l)$ where $\frac{q}{p}$ is equal to $\frac{l}{1-kl}$ in $\Q/\Z$ and $kl$ is even, 
a sharper estimate is given in \cite[Theorem 5.1]{KM12-1}.
\end{example}


\end{document}